\newtheorem{tw}{Theorem}[section]
\newtheorem{prop}[tw]{Proposition}
\newtheorem{cor}[tw]{Corollary}
\theoremstyle{definition}
\newtheorem{deff}[tw]{Definition}
\newtheorem{ex}{Example}
\begin{document}

\title{Characterization of the Haagerup property \\ for residually amenable groups}
\author{Kamil Orzechowski\footnotemark}

\date{}

\maketitle

\footnotetext[1]{\footnotesize Faculty of Mathematics and Natural Sciences, University of Rzesz\'ow, Poland\\
Email address: kamil.orz@gmail.com}

\begin{abstract}
The notions of a box family and fibred cofinitely-coarse embedding are introduced. The countable, residually amenable groups satisfying the Haagerup property are then characterized as those possessing a box family that admits a fibred cofinitely-coarse embedding into a Hilbert space. This is a generalization of a result of X. Chen, Q. Wang and X. Wang \cite{Chen5} on residually finite groups.

\vspace{5mm}

\noindent \textbf{Keywords}: Haagerup property, residually amenable groups, fibred coarse embedding.

\vspace{5mm}

\noindent \textbf{2010 Mathematics Subject Classification}: 20F65 (primary), 20E26, 51Fxx (secondary).
\end{abstract}

\section{Introduction}

After M. Gromov published his monograph \textit{Asymptotic invariants of infinite groups}, the modern geometric group theory began to develop significantly. Since then, many large scale geometric properties of groups have been studied. Besides the classical notion of amenability (introduced by J. von Neumann), there are many large scale invariants which can be defined for groups (asymptotic dimension, hyperbolicity, Yu's property A, coarse embeddability into various spaces). For a systematic approach, we refer to \cite{Nowak}.

The Haagerup property (or Gromov's \textit{a-T-menability}) may be thought as  a weak form of amenability (all amenable groups have this property, as it was shown in \cite{Bekka}) or, in some sense, as an ``equivariant" version of coarse embeddability into Hilbert spaces. It was introduced (in different formulations) by U. Haagerup \cite{Haagerup} and M. Gromov \cite{Gromov}. This property has various applications in many fields of mathematics (harmonic analysis, ergodic theory, theory of operator algebras, the Baum-Connes conjecture).

Among discrete groups, residually finite groups are of a special interest. Recall that a countable group $G$ is residually finite if there exists a nested sequence of finite index normal subgroups such that the intersection of these subgroups is trivial. It is important that the notion of a box space can be introduced for such groups. The reason is that, geometrically, a box space reflects in some way the structure of a given residually finite group. For example, $G$ is amenable exactly when $\mbox{Box}(G)$ has Yu's property A \cite{Nowak}. X. Chen, Q. Wang and X. Wang in their paper \cite{Chen5} gave similar characterization for the Haagerup property, namely: $G$ has the Haagerup property if and only if $\mbox{Box}(G)$ admits a \textit{fibred coarse embedding} into a Hilbert space. The latter idea of a fibred coarse embedding was introduced by them in \cite{Chen4} to attack the maximal Baum-Connes conjecture.

We observe that the proof of Proposition 2.10 from \cite{Chen5} will be still valid if we replace standard averaging (division by the cardinality of a set) by averaging based on amenability. So, we could generalize the theorem to the class of residually amenable groups. Then, of course, we lose finiteness of quotients, so we need to permit constructing  box spaces from infinite components. To ensure that also the converse theorem holds, the fibred coarse embedding was modified to what we called a \textit{fibred cofinitely-coarse embedding}. This is a property of metric families (i.e. families of metric spaces) and it can be specified for the case of \textit{box families}, which are essentially analogs of box spaces. In effect, we got a sufficient and necessary condition for a countable, residually amenable group to have the Haagerup property. The result of \cite{Chen5} can be reconstructed as a special case of our slight generalization.

It is not clear how much our notion of fibred cofinitely-coarse embedding can be useful for other applications. We have proved that it is, with some restrictions, invariant under coarse equivalences between metric families. It may be interesting if another theorems involving box spaces can be generalized to the case of residually amenable groups.

The author would like to express his gratefulness to Professor Michael Zarichnyi from the Rzeszów University for his supervision, and to Professor Bronisław Wajnryb, Andrzej Wiśnicki, Paweł Witowicz and Janusz Dronka from the Rzeszów University of Technology, who had earlier made the author interested in the Haagerup property and related topics.

\section{Residually amenable groups}

\begin{deff}\label{rag}
We say that a countable group $G$ is \textit{residually amenable} if there exists a sequence $(G_n)_{n\in \mathbb{N}}$ of normal subgroups in $G$, such that $G_{n+1}\subset G_n$ for $n\in \mathbb{N}$, $\bigcap \limits_{n\in \mathbb{N}} G_n=\{1_G\}$ and each quotient group $G/G_n$ is amenable.
\end{deff}

As we are considering here only countable groups, the definition above is equivalent to a more general one. Namely, a group $G$ is residually amenable if and only if for any $g\neq 1_G$ there exists a normal subgroup $N$ such that $g\notin N$ and $G/N$ is amenable. Another equivalent formulation: for any $g\neq 1_G$ there exists a homomorphism $f\colon G \to H$ into an amenable group $H$ such that $f(g)\neq 1_H$.

\begin{ex}
Naturally, all residually finite groups are residually amenable. The most significant examples are free groups of countable rank and finitely generated linear groups. There is an important fact that the automorphism group of a finitely generated residually finite group is residually finite. Moreover, it can be proved that any finitely generated residually finite group must be Hopfian. For details, see \cite{MagKarSol} and \cite{RF}.
\end{ex}

\begin{ex}
Since solvable groups are amenable, every residually solvable group is residually amenable \cite{Resam}. Particularly, \textit{Baumslag-Solitar groups} $\mbox{BS}(m,n)$ belong to this class. It is shown that $\mbox{BS}(m,n)$ is residually finite if and only if $|m|=1$, $|n|=1$ or $|n|=|m|$. Thus, for other cases, we obtain examples of residually amenable groups which are not residually finite.
\end{ex}

\begin{ex}
There is a related class of groups, namely \textit{initially subamenable} or \textit{locally embeddable in amenable (LEA)} groups. A group $G$ is LEA if for every finite subset $F\subset G$ there exists an injection $f\colon F \to H$ into an amenable group $H$ such that $f(xy)=f(x)\cdot f(y)$ for $x,y,xy\in F$. In the class of finitely presented groups, the LEA property is equivalent to residual amenability. The so called \textit{sofic groups} are a further generalization. Examples of finitely presented sofic groups which are not LEA and, equivalently, not residually amenable have been constructed \cite{LEA}.
\end{ex}

Let $G$ be a residually amenable group with a fixed sequence $(G_n)_{n\in \mathbb{N}}$ of normal subgroups satisfying the requirements of Definition \ref{rag}. Equip $G$ with a proper length function $l$ and its associated metric $d(x,y):=l(x^{-1}y)$, ($x,y \in G$). Then every quotient group $G/G_n$ becomes a metric space with the metric $d_n$ induced by the length function $l_n$ defined by the formula:
$$l_n([g]):=\min \{l(gh) \colon h\in G_n\}$$
for all $[g]\in G/G_n$. In that manner, we have obtained a family of metric spaces. Let us introduce some terminology in the following definition.
\begin{deff}
The \textit{box family} of $G$ corresponding to $(G_n)_{n\in \mathbb{N}}$ is the metric family \\ $\left\{\left(G/G_n,d_n\right)\colon n\in \mathbb{N}\right\}$.
\end{deff}

We can also make a single metric space from all quotients $G/G_n$, rather than consider them as elements of a metric family.

\begin{deff}
The \textit{box space} of $G$ corresponding to $(G_n)_{n\in \mathbb{N}}$ is the set $\mbox{Box}(G):=\bigsqcup \limits_{n\in \mathbb{N}} G/G_n$ endowed with the metric $d'$, defined as follows:
\begin{itemize}
\item the restriction of $d'$ to each $G/G_n$ is the original metric $d_n$,
\item $d'(x,y):=l_n(x)+l_m(y)+n+m$ for $x\in G/G_n$, $y\in G/G_m$ and $n\neq m$.
\end{itemize}
\end{deff}

It can be shown that the function $d'$ is indeed a metric. We remark that the box space satisfies the condition
$$d'(G/G_n,G/G_m):=\inf \{d'(x,y)\colon x\in G/G_n,\, y\in G/G_m\} \rightarrow \infty$$
as $n+m \rightarrow \infty$ and $n\neq m$.

\section{Fibred coarse embedding and its generalization}

Now we are going to recall the notion of a \textit{fibred coarse embedding} \cite{Chen5,Chen4} and then modify it slightly so that it would be useful for metric families. 

\begin{deff}[\cite{Chen5}]\label{fce}
A metric space $(X,d)$ is said to admit a \textit{fibred coarse embedding} into a metric space $(Y,d_1)$ if there exist:
\begin{itemize}
\item a field of metric spaces $(Y_x)_{x\in X}$ such that each $Y_x$ is isometric to $Y$,
\item a \textit{section} $s\colon X \to \bigsqcup\limits_{x\in X} Y_x$ satisfying $s(x)\in Y_x$,
\item two non-decreasing functions $\rho_1,\rho_2\colon [0,\infty)\to [0,\infty)$ with $\lim \limits_{r\rightarrow \infty} \rho_1(r)=\infty$
\end{itemize}
such that, for any $r>0$, there exists a bounded subset $K_r\subset X$ for which, for any subset $C\subset X\setminus K_r$ of diameter less than $r$, there exists a \textit{trivialization}
$$t_C\colon \bigsqcup \limits_{x\in C} Y_x \to C\times Y,$$
such that the restriction of $t_C$ to any fibre $Y_x$, $x\in C$, is an isometry $t_C(x)\colon Y_x \to Y$ satisfying the following conditions:
\begin{enumerate}
\item for any $x,y \in C$: $\rho_1(d(x,y))\leq d_1(t_C(x)(s(x)),t_C(y)(s(y)))\leq \rho_2(d(x,y))$,
\item for any two subsets $C_1, C_2 \subset X\setminus K_r$ of diameter less than $r$ with $C_1 \cap C_2 \neq \emptyset$ there exists an isometry $t_{C_{1}C_2}\colon Y\to Y$ such that $t_{C_1}(x)\circ t^{-1}_{C_2} (x) = t_{C_{1}C_2}$ for all $x\in C_1 \cap C_2$.
\end{enumerate}
\end{deff}

\begin{deff}\label{fcf-ce}
A family $\mathcal{X}$ of disjoint metric spaces is said to admit a \textit{fibred cofinitely-coarse embedding} into a metric space $(Y,d_1)$ if there exist:
\begin{itemize}
\item a field of metric spaces $(Y_x)_{x\in \bigcup \mathcal{X}}$ such that each $Y_x$ is isometric to $Y$,
\item a \textit{section} $s\colon \bigcup \mathcal{X} \to \bigsqcup\limits_{x\in \bigcup \mathcal{X}} Y_x$ satisfying $s(x)\in Y_x$,
\item two non-decreasing functions $\rho_1,\rho_2\colon [0,\infty)\to [0,\infty)$ with $\lim \limits_{r\rightarrow \infty} \rho_1(r)=\infty$
\end{itemize}
such that, for any $r>0$, there exists a finite subfamily $\mathcal{K}_{r}\subset \mathcal{X}$ such that, for any metric space $(X,d)\in \mathcal{X}\setminus \mathcal{K}_{r}$ and for any subset $C\subset X$ of diameter less than $r$, there exists a \textit{trivialization}
$$t_C\colon \bigsqcup \limits_{x\in C} Y_x \to C\times Y,$$
such that the restriction of $t_C$ to any fibre $Y_x$, $x\in C$, is an isometry $t_C(x)\colon Y_x \to Y$ satisfying the following conditions:
\begin{enumerate}
\item for any $x,y \in C$: $\rho_1(d(x,y))\leq d_1(t_C(x)(s(x)),t_C(y)(s(y)))\leq \rho_2(d(x,y))$,
\item for any two subsets $C_1, C_2 \subset X$ of diameter less than $r$ with $C_1 \cap C_2 \neq \emptyset$ there exists an isometry $t_{C_{1}C_2}\colon Y\to Y$ such that $t_{C_1}(x)\circ t^{-1}_{C_2} (x) = t_{C_{1}C_2}$ for all $x\in C_1 \cap C_2$.
\end{enumerate}
\end{deff}

To motivate the idea of a fibred cofinitely-coarse embedding, we will show that it is invariant in some coarse sense. That means, this property is preserved under coarse equivalences of metric families, with some additional restrictions. Assume, we have two metric families $\tilde{\mathcal{X}}$ and $\mathcal{X}$. By a \textit{map of families} $\mathcal{F}\colon \tilde{\mathcal{X}}\to \mathcal{X}$ we understand a collection of functions such that each $f\in \mathcal{F}$ is a map $f\colon \tilde{X} \to X$ for some $\tilde{X}\in \tilde{\mathcal{X}}$, $X\in \mathcal{X}$, and each $\tilde{X} \in \tilde{\mathcal{X}}$ is the domain of at least one $f\in \mathcal{F}$.

\begin{deff}
A map of metric families $\mathcal{F}\colon \tilde{\mathcal{X}}\to \mathcal{X}$ is a \textit{coarse embedding} if there exist two non-decreasing functions $m,M\colon [0,\infty)\to [0,\infty)$ with $\lim \limits_{r\rightarrow \infty} m(r)=\infty$ such that for every $f\in \mathcal{F}$, where $f\colon \tilde{X} \to X$, and for any $\tilde{x},\tilde{x}'\in \tilde{X}$:
$$m\left(d_{\tilde{X}}(\tilde{x},\tilde{x}')\right)\leq d_{X}(f(\tilde{x}), f(\tilde{x}'))\leq M\left(d_{\tilde{X}}(\tilde{x},\tilde{x}')\right).$$
We call $\mathcal{F}$ a \textit{coarse equivalence} if additionally every $X\in \mathcal{X}$ is the codomain of some $f\in \mathcal{F}$ and there exists a constant $C>0$ such that, for any $f\in \tilde{X}\to X$ in $\mathcal{F}$, $f(\tilde{X})$ is a $C$-net in $X$.
\end{deff}

\begin{prop}
Let a map of metric families (each of them being disjoint if considered separately) $\mathcal{F}\colon \tilde{\mathcal{X}}\to \mathcal{X}$ be a coarse embedding such that each $X\in \mathcal{X}$ is the codomain of only finitely many functions $f\in \mathcal{F}$. If $\mathcal{X}$ admits a fibred cofinitely-coarse embedding into $(Y,d_1)$, then so does $\tilde{\mathcal{X}}$.
\end{prop}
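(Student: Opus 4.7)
The plan is to pull back the fibred cofinitely-coarse embedding structure of $\mathcal{X}$ to $\tilde{\mathcal{X}}$ using a fixed representative from $\mathcal{F}$ for each $\tilde{X}$. For each $\tilde{X}\in \tilde{\mathcal{X}}$ I would fix, once and for all, some $f_{\tilde{X}}\in \mathcal{F}$ with domain $\tilde{X}$, and denote its codomain by $X_{\tilde{X}}\in \mathcal{X}$. Writing $(Y_x)_{x\in \bigcup \mathcal{X}}$, $s$, $\rho_1,\rho_2$, and $\mathcal{K}_r$ for the data witnessing the fibred cofinitely-coarse embedding of $\mathcal{X}$, and $m,M$ for the control functions of $\mathcal{F}$, I would define the pulled-back field by $\tilde{Y}_{\tilde{x}}:=Y_{f_{\tilde{X}}(\tilde{x})}$ for $\tilde{x}\in \tilde{X}$ (well-defined because $\tilde{\mathcal{X}}$ is disjoint), the section by $\tilde{s}(\tilde{x}):=s(f_{\tilde{X}}(\tilde{x}))$, and the control functions by $\tilde{\rho}_1:=\rho_1\circ m$ and $\tilde{\rho}_2:=\rho_2\circ M$. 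These are non-decreasing and $\tilde{\rho}_1(r)\to\infty$.

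Given $r>0$, set $\tilde{\mathcal{K}}_r:=\{\tilde{X}\in\tilde{\mathcal{X}}\,:\,X_{\tilde{X}}\in \mathcal{K}_{M(r)}\}$; this subfamily is finite precisely because $\mathcal{K}_{M(r)}$ is finite and each $X\in \mathcal{X}$ is the codomain of only finitely many $f\in \mathcal{F}$. For $\tilde{X}\notin \tilde{\mathcal{K}}_r$ and any $\tilde{C}\subset \tilde{X}$ of diameter less than $r$, the image $C:=f_{\tilde{X}}(\tilde{C})$ sits inside $X_{\tilde{X}}\notin \mathcal{K}_{M(r)}$ and has diameter less than $M(r)$ by the coarse embedding inequality, so a trivialization $t_C$ exists in the data for $\mathcal{X}$. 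I would then transport it to $\tilde{\mathcal{X}}$ by setting $\tilde{t}_{\tilde{C}}(\tilde{x})(v):=t_C(f_{\tilde{X}}(\tilde{x}))(v)$ for $v\in \tilde{Y}_{\tilde{x}}=Y_{f_{\tilde{X}}(\tilde{x})}$; each restriction $\tilde{t}_{\tilde{C}}(\tilde{x})$ is then an isometry onto $Y$, being a composition of identifications with $t_C(f_{\tilde{X}}(\tilde{x}))$.

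Verification of the two axioms is then automatic. Condition (1) follows by chaining the bound for $t_C$ at the points $f_{\tilde{X}}(\tilde{x}),f_{\tilde{X}}(\tilde{x}')\in C$ with $m(d(\tilde{x},\tilde{x}'))\le d(f_{\tilde{X}}(\tilde{x}),f_{\tilde{X}}(\tilde{x}'))\le M(d(\tilde{x},\tilde{x}'))$, yielding $\tilde{\rho}_1(d(\tilde{x},\tilde{x}'))\le d_1(\tilde{t}_{\tilde{C}}(\tilde{x})(\tilde{s}(\tilde{x})),\tilde{t}_{\tilde{C}}(\tilde{x}')(\tilde{s}(\tilde{x}')))\le \tilde{\rho}_2(d(\tilde{x},\tilde{x}'))$. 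For condition (2), if $\tilde{C}_1\cap \tilde{C}_2\neq\emptyset$ then $C_1\cap C_2\supset f_{\tilde{X}}(\tilde{C}_1\cap \tilde{C}_2)\neq\emptyset$, so the compatibility isometry $t_{C_1C_2}\colon Y\to Y$ exists; defining $\tilde{t}_{\tilde{C}_1\tilde{C}_2}:=t_{C_1C_2}$, the identity $\tilde{t}_{\tilde{C}_1}(\tilde{x})\circ \tilde{t}_{\tilde{C}_2}^{-1}(\tilde{x})=t_{C_1C_2}$ on $\tilde{C}_1\cap\tilde{C}_2$ is immediate from the construction.

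The only nontrivial point is ensuring that $\tilde{\mathcal{K}}_r$ is finite, and this is exactly where the hypothesis on finitely many codomains is used: without it, a single $X\in \mathcal{K}_{M(r)}$ could serve as the codomain of infinitely many $f\in \mathcal{F}$, so arbitrarily many $\tilde{X}$ would need to be excluded and the cofiniteness clause of Definition \ref{fcf-ce} would fail. Everything else in the argument is a direct transport along $f_{\tilde{X}}$ and should go through with no further technical difficulty.
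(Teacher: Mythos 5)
Your proposal is correct and follows essentially the same route as the paper: pull back the field, section and trivializations along a chosen representative $f_{\tilde{X}}\in\mathcal{F}$, compose the control functions with $m$ and $M$, and use the finite-fibre hypothesis on codomains to keep $\tilde{\mathcal{K}}_r$ finite. The only (trivially fixable) slip is that the coarse-embedding inequality gives $\mathrm{diam}\bigl(f_{\tilde{X}}(\tilde{C})\bigr)\leq M(r)$ rather than $<M(r)$, so you should invoke the data at the scale $M(r)+1$ (i.e.\ use $\mathcal{K}_{M(r)+1}$), exactly as the paper does.
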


\begin{proof}
Assume that $(Y_x)_{x\in \bigcup \mathcal{X}}$, a section $s$ and functions $\rho_1,\rho_2$ fulfill Definition \ref{fcf-ce}. For the metric family $\tilde{\mathcal{X}}$, we define $(Y_{\tilde{x}})_{\tilde{x}\in \bigcup \tilde{\mathcal{X}}}$ putting $Y_{\tilde{x}}:=Y_{f(\tilde{x})}$ where $f\in \mathcal{F}$ is a function $f\colon \tilde{X}\to X$ from the set $\tilde{X}\in \tilde{\mathcal{X}}$ containing $\tilde{x}$ (if there is some ambiguity, the choice of $f$ can be made arbitrarily). Then, we define the section $\tilde{s}(\tilde{x}):=s(f(\tilde{x}))$.
Fix $r>0$. Define $\tilde{\mathcal{K}}_r \subset \tilde{\mathcal{X}}$ to be the family of all $\tilde{X}\in \tilde{\mathcal{X}}$ which are mapped to some $X\in \mathcal{K}_{M(r)+1}$ by some $f\in \mathcal{F}$. By assumption, $\tilde{\mathcal{K}}_r$ is finite. Let $(\tilde{X},\tilde{d})\in \tilde{\mathcal{X}}\setminus \tilde{\mathcal{K}}_r$ and $\tilde{C}\subset \tilde{X}$ with diameter less than $r$. Then $\mbox{diam}(f(\tilde{C}))\leq M(r) < M(r)+1$ (for any $f\colon \tilde{X}\to X$) and $f(\tilde{C})$ does not lie in any set from $\mathcal{K}_{M(r)+1}$, so that we can ``lift" the corresponding trivializations, namely $t_{\tilde{C}}(\tilde{x}):=t_{f(\tilde{C})}(f(\tilde{x}))$. Then, for $\tilde{x},\tilde{y} \in \tilde{C}$, we have
$$\rho_1(d(f(\tilde{x}),f(\tilde{y})))\leq d_1(t_{\tilde{C}}(\tilde{x})(\tilde{s}(\tilde{x})),t_{\tilde{C}}(\tilde{y})(\tilde{s}(\tilde{y})))\leq \rho_2(d(f(\tilde{x}),f(\tilde{y}))).$$
By the definition of a coarse embedding, we obtain finally
$$(\rho_1\circ m) (\tilde{d}(\tilde{x},\tilde{y}))\leq d_1(t_{\tilde{C}}(\tilde{x})(\tilde{s}(\tilde{x})),t_{\tilde{C}}(\tilde{y})(\tilde{s}(\tilde{y})))\leq (\rho_2\circ M) (\tilde{d}(\tilde{x},\tilde{y})),$$
so $\rho_1 \circ m$ and $\rho_2 \circ M$ are such as required in Definition \ref{fcf-ce}. The remaining condition (the case when trivializations coincide) is obviously satisfied .
\end{proof}

In the following proposition, we are going to prove a connection between the two definitions stated above in the case of box spaces and box families.

\begin{prop}
If a box space $\mathrm{Box}\,(G)$ of a residually amenable group $G$ admits a fibred coarse embedding into $(Y,d_1)$, then the corresponding box family $\mathcal{X}:=\left\{\left(G/G_n,d_n\right)\colon n\in \mathbb{N}\right\}$ admits a fibred cofinitely-coarse embedding into $(Y,d_1)$.
\end{prop}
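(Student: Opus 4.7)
The plan is to transport the data of the fibred coarse embedding of $\mathrm{Box}(G)$ directly to the box family $\mathcal{X}$, exploiting the fact that the components $G/G_n$ drift apart in $d'$, so that any bounded subset of $\mathrm{Box}(G)$ can only intersect finitely many of them. Since $\bigcup \mathcal{X}$ is the same underlying set as $\mathrm{Box}(G)$, I would keep exactly the same field $(Y_x)_{x\in \bigcup\mathcal{X}}$, the same section $s$, and the same distortion controls $\rho_1,\rho_2$ given by the hypothesis.

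Fix $r>0$ and let $K_r\subset \mathrm{Box}(G)$ be the bounded set supplied by the fibred coarse embedding. The key observation is that the condition $d'(G/G_n,G/G_m)\to \infty$ as $n+m\to\infty$ (with $n\neq m$), recorded after the definition of $\mathrm{Box}(G)$, implies that only finitely many of the quotients can be met by a bounded set: indeed, picking any $x_0\in G/G_{n_0}$ with $K_r\subset B_{d'}(x_0,R)$, the inequality $d'(G/G_{n_0},G/G_n)\le R$ holds for only finitely many indices $n\neq n_0$. I would therefore define
$$\mathcal{K}_r:=\{\,G/G_n\in\mathcal{X}\colon K_r\cap G/G_n\neq\emptyset\,\},$$
which is a finite subfamily of $\mathcal{X}$.

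For $(X,d_n)=(G/G_n,d_n)\in\mathcal{X}\setminus\mathcal{K}_r$ we have $X\cap K_r=\emptyset$, so any subset $C\subset X$ of $d_n$-diameter less than $r$ is automatically contained in $\mathrm{Box}(G)\setminus K_r$ and, because $d'$ restricted to $G/G_n$ coincides with $d_n$, also has $d'$-diameter less than $r$. Hence the trivialization $t_C$ given by the fibred coarse embedding of $\mathrm{Box}(G)$ applies verbatim; its distortion estimate reads $\rho_1(d'(x,y))\le d_1(t_C(x)(s(x)),t_C(y)(s(y)))\le \rho_2(d'(x,y))$ for $x,y\in C$, and since $d'(x,y)=d_n(x,y)$ on the single component, this is exactly condition (1) of Definition \ref{fcf-ce}. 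Condition (2) transfers in the same way, since any two subsets $C_1,C_2\subset X$ of $d_n$-diameter less than $r$ are, by the same argument, subsets of $\mathrm{Box}(G)\setminus K_r$ of $d'$-diameter less than $r$.

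I do not expect a genuine obstacle: the proof is essentially a bookkeeping translation once the finiteness of $\mathcal{K}_r$ is established. The only point worth being careful about is that one must verify $\mathcal{K}_r$ absorbs every component the bounded set $K_r$ touches (not just those contained in $K_r$), which is exactly why the definition of $\mathcal{K}_r$ uses non-empty intersection rather than inclusion.
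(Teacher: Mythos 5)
Your proposal is correct and follows essentially the same route as the paper: keep the field, section, and controlling functions unchanged, let $\mathcal{K}_r$ be the finitely many components that the bounded set $K_r$ meets (finiteness coming from $d'(G/G_n,G/G_m)\to\infty$), and note that for components outside $\mathcal{K}_r$ every small-diameter subset lies in $\mathrm{Box}(G)\setminus K_r$ so the trivializations transfer verbatim. Your added care in spelling out the finiteness argument and the agreement of $d'$ with $d_n$ on each component is a harmless elaboration of the same idea.
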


\begin{proof}
Since the union $\bigcup \mathcal{X}$ is (as a set) exactly the box space, we can choose the same field of metric spaces $(Y_x)$ and the same section $s$ which fulfilled Definition \ref{fce} for $\mbox{Box}(G)$. We also do not need to change the controlling functions $\rho_1,\rho_2$. Then, fix $r>0$. Since $K_r$ is a bounded set and $d'(G/G_n,G/G_m) \rightarrow \infty$, $K_r$ can intersect only finitely many components of the disjoint union $\bigsqcup \limits_{n\in \mathbb{N}} G/G_n$. We denote by $\mathcal{K}_r$ the finite family of these components. Thus, if $C\subset X\in \mathcal{X}\setminus \mathcal{K}_r$, then $C\subset \mbox{Box}(G)\setminus K_r$. Therefore, the trivializations $t_C$ can be  transferred to the case of the box family without any changes and the required conditions will be obviously satisfied.
\end{proof}

We notice that the reverse implication holds provided that the elements of the box family are bounded. It is the case when the quotients $G/G_n$ are finite, i.e. the group $G$ is residually finite. Thus, we have the following corollary:

\begin{cor}\label{finite}
For any residually finite group the concepts of a fibred coarse embedding (for a box space) and a fibred cofinitely-coarse embedding (for the corresponding box family) are equivalent.
\end{cor}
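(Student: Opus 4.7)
The forward direction is exactly the preceding proposition, so only the reverse implication needs to be established: assuming that the box family $\mathcal{X}=\{(G/G_n,d_n):n\in\mathbb{N}\}$ admits a fibred cofinitely-coarse embedding into $(Y,d_1)$ and that each $G/G_n$ is finite, construct a fibred coarse embedding of $\mathrm{Box}(G)$. The plan is to reuse the same data: the field $(Y_x)_{x\in\bigcup\mathcal{X}}=(Y_x)_{x\in \mathrm{Box}(G)}$, the section $s$, and the controlling functions $\rho_1,\rho_2$. The only object that must be rebuilt is, for each $r>0$, the bounded subset $K_r\subset\mathrm{Box}(G)$ prescribed by Definition \ref{fce}, together with the verification that outside $K_r$ every $r$-small subset admits a trivialization.

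Fix $r>0$ and let $\mathcal{K}_r\subset\mathcal{X}$ be the finite subfamily produced by Definition \ref{fcf-ce}. The key observation is that, because $d'(G/G_n,G/G_m)\to\infty$ as $n+m\to\infty$ with $n\neq m$, there exists an integer $N$ such that $d'(G/G_n,G/G_m)>r$ whenever $n\neq m$ and $\min(n,m)\geq N$. Define
$$K_r:=\Bigl(\bigcup_{(G/G_n,d_n)\in\mathcal{K}_r}G/G_n\Bigr)\,\cup\,\bigcup_{n<N}G/G_n.$$
This is a finite union of finite sets (here the residual finiteness of $G$ is used), hence bounded in $\mathrm{Box}(G)$.

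Now for any $C\subset\mathrm{Box}(G)\setminus K_r$ with $\mathrm{diam}(C)<r$, every quotient $G/G_n$ meeting $C$ satisfies $n\geq N$ and $(G/G_n,d_n)\notin\mathcal{K}_r$. If $C$ met two distinct components $G/G_n$ and $G/G_m$, then $n,m\geq N$ and $n\neq m$, so $d'(G/G_n,G/G_m)>r$, contradicting $\mathrm{diam}(C)<r$. Therefore $C$ lies entirely in a single component $(G/G_n,d_n)\in\mathcal{X}\setminus\mathcal{K}_r$, and the trivialization $t_C$ supplied by the fibred cofinitely-coarse embedding can be taken verbatim; condition (1) transfers because $d'$ restricted to $G/G_n$ equals $d_n$. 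For condition (2), if $C_1,C_2\subset\mathrm{Box}(G)\setminus K_r$ have diameters less than $r$ and $C_1\cap C_2\neq\emptyset$, then both sit in the same component $G/G_n$, and the compatibility clause of Definition \ref{fcf-ce} supplies the required isometry $t_{C_1C_2}$.

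The only genuinely delicate point is the choice of the threshold $N$ guaranteeing that no $r$-small subset outside $K_r$ can straddle two components; everything else is bookkeeping. This hinges on the growth condition $d'(G/G_n,G/G_m)\to\infty$, which is built into the definition of $d'$. Boundedness of the resulting $K_r$ is precisely the place where the assumption of residual finiteness (finiteness of each $G/G_n$) is essential, and it is exactly the reason the reverse implication fails for general residually amenable groups.
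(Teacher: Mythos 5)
Your proposal is correct and follows the same route the paper intends: the forward direction is the preceding proposition, and the reverse direction uses the finiteness (hence boundedness) of the quotients to assemble a bounded set $K_r$ from the finitely many excluded components, with the separation $d'(G/G_n,G/G_m)\to\infty$ forcing every small subset outside $K_r$ into a single component. The paper only sketches this; your write-up supplies the details consistently with that sketch.
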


\section{Characterization of the Haagerup property}

The \textit{Haagerup property}, which is often called \textit{a-T-menability}, has many different formulations. For a comprehensive description of the property and its motivation, as well as fundamental results, we refer to \cite{Haag}. Here, we are going to state two equivalent defining conditions. It is remarkable that the Haagerup property may be defined for topological groups, but we restrict our attention to discrete groups.

\begin{deff}[\cite{Haag}]
A countable group $G$ is said to have the \textit{Haagerup property} if it satisfies one of the following equivalent conditions:
\begin{enumerate}
\item there exists a proper function $\psi \colon G \to [0,\infty)$ which is conditionally negative definite, that is, $\psi(g^{-1})=\psi(g)$ for all $g\in G$, and for all $g_1,\dots,g_n \in G$ and all $a_1,\dots, a_n \in \mathbb{R}$ with $\sum a_i =0$,
$$\sum_{i,j} a_i a_j \psi(g_{i}^{-1} g_j)\, \leq 0;$$
\item $G$ is \textit{a-T-menable}: there exists a Hilbert space $H$ and an isometric affine action $\alpha$ of $G$ on $H$ which is proper in the sense that, for all pairs of bounded subsets $B$ and $C$ of $H$, the set of elements $g\in G$ such that $(\alpha(g))(B)\cap C \neq \emptyset$ is finite.
\end{enumerate}
\end{deff}

Firstly, we are going to generalize Proposition 2.10 from \cite{Chen5}, which gives a sufficient condition for the Haagerup property in the case of a residually finite group.

\begin{tw}\label{=>}
Let $G$ be a countable, residually amenable group. If a box family of $G$ admits a fibred cofinitely-coarse embedding into a Hilbert space, then $G$ has the Haagerup property.
\end{tw}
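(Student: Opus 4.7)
The plan is to produce a proper, conditionally negative definite function $\psi\colon G \to [0,\infty)$ on $G$, which by the first characterization in the definition of the Haagerup property suffices. The overall strategy mirrors the proof of \cite[Prop.~2.10]{Chen5} for residually finite groups, but with the ``divide by $|G/G_n|$'' averaging step replaced throughout by integration against an invariant mean on the amenable quotient $G/G_n$---this is precisely the substitution flagged in the introduction.

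Fix a scale $r > 0$. Choose $n$ large enough that $G/G_n \notin \mathcal{K}_r$ and such that $\pi_n \colon G \to G/G_n$ is injective on the $l$-ball of radius $r/3$ in $G$; the latter is possible because properness of $l$ together with $\bigcap_n G_n = \{1_G\}$ forces $G_n \cap \{g : l(g) < r/3\} = \{1_G\}$ for $n$ large. The set $C_n := \{[g]\in G/G_n : l_n([g]) < r/3\}$ has diameter less than $r$, so Definition \ref{fcf-ce} supplies a trivialization $t_{C_n}$. Translating by $t_{C_n}([1])(s([1]))$ yields a map $F_n\colon C_n \to Y$ with $F_n([1]) = 0$ satisfying
$$\rho_1(d_n([g],[h])) \leq \|F_n([g]) - F_n([h])\| \leq \rho_2(d_n([g],[h])), \quad [g],[h] \in C_n.$$

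Using amenability of $G/G_n$, I would then construct from $F_n$ a conditionally negative definite function $\psi_n^r$ on $G/G_n$ by following the explicit formula of \cite[Prop.~2.10]{Chen5} verbatim, with each sum of the form $\tfrac{1}{|G/G_n|}\sum_{[h]\in G/G_n}$ replaced by integration against an invariant mean $m_n$ on $\ell^\infty(G/G_n)$. Averages of conditionally negative definite kernels remain conditionally negative definite, so $\psi_n^r$ has the required type; one expects it to be uniformly bounded in terms of $r$ and $\rho_2$, with a lower bound $\psi_n^r([g]) \geq c_r\,\rho_1(l_n([g]))^2$ for $l_n([g]) < r/6$ and some $c_r > 0$ independent of $n$. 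The pullback $\tilde\psi_n^r := \psi_n^r\circ\pi_n$ inherits these properties on $G$, and a weighted series $\psi := \sum_k c_k\,\tilde\psi_{n_k}^{r_k}$ with $r_k \to \infty$ and weights $c_k$ chosen so that the upper bounds are summable while the combined lower bounds force divergence in $l(g)$ then yields the desired proper conditionally negative definite function on $G$.

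The principal obstacle is the construction in the middle step. In \cite{Chen5} the normalization $\tfrac{1}{|G/G_n|}$ plays a double role: it bounds the total mass of the kernel from above, and it enters implicitly in the lower bound via the proportion of translates $[h]$ for which both $[h]$ and $[h][g]$ lie in $C_n$. When $G/G_n$ is infinite amenable, any finite ball is $m_n$-null, so a naive replacement annihilates the lower bound. The resolution, promised by ``averaging based on amenability'' in the introduction, lies in observing that in the relevant formula the factors $\tfrac{1}{|G/G_n|}$ enter in matching numerator/denominator pairs that simplify, leaving expressions for which invariant-mean averaging is the correct substitute; verifying this cancellation---and thereby preserving both bounds in the amenable setting---is the real content of the argument.
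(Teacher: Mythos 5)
There is a genuine gap, and it sits exactly where you placed it: the middle step is not carried out, and your diagnosis of how to repair it points in the wrong direction. You work with a single trivialization over one ball $C_n$ around the identity, obtaining one map $F_n\colon C_n\to Y$, and then hope that the $\tfrac{1}{|G/G_n|}$ factors in the Chen--Wang--Wang formula cancel so that an invariant mean can replace them. But the correct resolution is not a cancellation; it is structural. Definition \ref{fcf-ce} gives you a trivialization over \emph{every} subset of $G/G_n$ of diameter less than $r$, and the compatibility condition (2) makes the quantity $k_r([x],[y]):=\|t_C([x])(s([x]))-t_C([y])(s([y]))\|^2$ independent of the chosen patch $C\ni [x],[y]$. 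Hence $k_r$ is a well-defined, $r$-locally conditionally negative definite kernel on \emph{all} of $G/G_n\times G/G_n$ (set to $0$ at distance $\geq r$), not just on one ball. One then puts $\phi_r([x]):=\int_{G/G_n} k_r([t],[tx])\,d\mu_n([t])$ against a right-invariant mean $\mu_n$. Since $d_n([t],[tx])=l_n([x])$ for every $[t]$, the integrand satisfies $(\rho_1(l_n([x])))^2\leq k_r([t],[tx])\leq(\rho_2(l_n([x])))^2$ \emph{pointwise in $[t]$}, so the normalized mean preserves both bounds with constant $1$; the ``finite balls are null'' problem you worry about never arises, because the lower bound does not come from counting how many translates land in a fixed ball. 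Right-invariance of the mean is what makes $\phi_r$ descend to a function of $[x^{-1}y]$ and keeps the averaged kernel conditionally negative definite. Without this reformulation, your single-ball construction cannot produce the needed lower bound on an infinite amenable quotient, and you have not supplied the argument that would.

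A second, smaller issue is the final assembly. The functions $\psi_r=\phi_r\circ\pi_{n_r}$ obtained this way are only $r$-locally conditionally negative definite (they are truncated to $B_G(1_G,r)$), so a weighted series $\sum_k c_k\,\tilde\psi^{r_k}_{n_k}$ is not obviously conditionally negative definite on all of $G$. The paper instead uses the uniform upper bound $\psi_r(x)\leq(\rho_2(l(x)))^2$ to view $(\psi_r)_r$ as a sequence in the Hilbert cube $\prod_{x\in G}[0,(\rho_2(l(x)))^2]$, extracts a pointwise convergent subsequence, and checks that the limit $\psi$ is proper and globally conditionally negative definite because any finite configuration is eventually covered by the $r$-local condition. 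Your series could perhaps be made to work if each summand were genuinely globally conditionally negative definite, but that is precisely what the truncated construction does not give you.
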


\begin{proof}
Let $(G_n)_{n\in \mathbb{N}}$ be a sequence of normal subgroups with amenable quotients, satisfying the conditions of Definition \ref{rag}, such that the associated box family $\mathcal{X}:=\left\{\left(G/G_n,d_n\right)\colon n\in \mathbb{N}\right\}$ admits a fibred cofinitely-coarse embedding into a (real) Hilbert space $H$.

Fix an integer $r>0$. Let $\mathcal{K}_r$ be such as in Definition \ref{fcf-ce}. Since $\mathcal{K}_r$ is a finite family, there exists $n_r$ such that $\left(G/G_{n_r},d_{n_r}\right) \in \mathcal{X}\setminus \mathcal{K}_r$. Moreover, the ball $B_G (1_G,2r)$ is finite and, using the assumption $\bigcap \limits_{n\in \mathbb{N}} G_n=\{1_G\}$, we can choose $n_r$ large enough so that $B_G (1_G,2r) \cap G_{n_r} = \{1_G\}$. Then the quotient map $\pi_{n_r}\colon G \to G/G_{n_r}$ is an isometry, if restricted to any subset of diameter less than $r$. For any $[x],[y]\in G/G_{n_r}$ with $d_{n_r}([x],[y])<r$ choose a subset $C\subset G/G_{n_r}$ of diameter less than $r$ containing $[x],[y]$ and define:
$$k_{r}([x],[y]):=\|t_C([x])(s([x])) - t_C([y])(s([y]))\|^2.$$
We will show that this definition does not depend on the choice of the subset $C$. Suppose that $C_1,C_2$ are two subsets of $G/G_{n_r}$ of diameters less than $r$, both containing $[x]$ and $[y]$, and $t_{C_1},t_{C_2}$ are the corresponding trivializations. By the definition of a fibred cofinitely-coarse embedding we have
$$t_{C_1}([x])\circ t_{C_2}^{-1}([x])=t_{C_1 C_2}=t_{C_1}([y])\circ t_{C_2}^{-1}([y]),$$
where $t_{C_1 C_2}\colon H \to H$ is an (affine) isometry.
So 
$$
\begin{array}{l}
\|t_{C_2}([x])(s([x])) - t_{C_2}([y])(s([y]))\|^2=\|(t_{C_1 C_2} \circ t_{C_2}([x]))(s([x])) - (t_{C_1 C_2}\circ t_{C_2}([y]))(s([y]))\|^2\\
=\|t_{C_1}([x])(s([x])) - t_{C_1}([y])(s([y]))\|^2.
\end{array}$$ Put $k_r([x],[y])=0$ if $d_{n_r}([x],[y])\geq r$. Then $k_r$ is a well-defined function on $G/G_{n_r} \times G/G_{n_r}$. We will show that this function is an $r$-locally conditionally negative definite kernel on $G/G_{n_r} \times G/G_{n_r}$. Choose a finite sequence of points $[x_1],\dots , [x_m]$ in $G/G_{n_r}$ with $d_{n_r}([x_i],[x_j])<r$ (for $1\leq i,j \leq m$) and a sequence of real scalars $\lambda_1,\dots , \lambda_m$ with $\sum\limits_{i=1}^{m}\lambda_i=0$. Compute the expression
$$\begin{array}{l}
\sum\limits_{1\leq i,j \leq m} \lambda_i \lambda_j \, k_r([x_i],[x_j]) = \sum\limits_{1\leq i,j \leq m} \lambda_i \lambda_j \, \|t_C([x_i])(s([x_i]))\|^2 + \sum\limits_{1\leq i,j \leq m} \lambda_i \lambda_j \, \|t_C([x_j])(s([x_j]))\|^2 \\ 
-2\sum\limits_{1\leq i,j \leq m} \lambda_i \lambda_j \, \left\langle \, t_C([x_i])(s([x_i])), \, t_C([x_j])(s([x_j])\right\rangle \,.
\end{array}$$
The first two sums vanish due to the assumption $\sum\limits_{i=1}^{m}\lambda_i=0$ and we have
$$\begin{array}{l}
\sum\limits_{1\leq i,j \leq m} \lambda_i \lambda_j \, k_r([x_i],[x_j])=-2\sum\limits_{1\leq i,j \leq m} \lambda_i \lambda_j \, \left\langle \, t_C([x_i])(s([x_i])), \, t_C([x_j])(s([x_j])\right\rangle \, \\
= -2 \left\langle \, \sum\limits_{1\leq i \leq m} \lambda_i \, t_C([x_i])(s([x_i])), \, \sum\limits_{1\leq i \leq m} \lambda_i \, t_C([x_i])(s([x_i])) \right\rangle \, \leq 0.
\end{array}$$
Should $\rho_1,\rho_2$ be the controlling function from Definition \ref{fcf-ce}, we obtain
\begin{equation}\label{controlling}
(\rho_1 (d_{n_r}([x],[y])))^2 \leq k_r ([x],[y]) \leq (\rho_2 (d_{n_r}([x],[y])))^2
\end{equation}
for any $[x],[y] \in G/G_{n_r}$ with $d_{n_r}([x],[y])<r$.

Now we are going to define a function $\phi_r \colon B_{G/G_{n_r}}(1_{G/G_{n_r}},r) \to [0,\infty)$. We refer to amenability of $G/G_{n_r}$. By one of equivalent definitions of amenability, there exists a finitely additive measure $\mu_{n_r} \colon 2^{G/G_{n_r}} \to [0,1]$ which is right-invariant (i.e. $\mu_{n_r} (A[x]) = \mu_{n_r} (A)$ for $A\subset G/G_{n_r}$, $[x]\in G/G_{n_r}$) and normalized (i.e. $\mu_{n_r} (G/G_{n_r})=1$). This measure determines (via Lebesgue integration) a linear, order-preserving functional on $l^{\infty} (G/G_{n_r})$ which is invariant under composition with right multiplication in the group. For $[x]\in B_{G/G_{n_r}}(1_{G/G_{n_r}},r)$ define
$$\phi_r ([x]):=\int\limits_{G/G_{n_r}} k_r ([t],[tx]) \, d\mu_{n_r}([t]).$$
The integrated function is bounded because $d_{n_r} ([t], [tx]) = d_{n_r} (1_{G/G_{n_r}},[x])<r$ and we can apply \eqref{controlling} to obtain
$(\rho_1 (l_{n_r} ([x])))^2 \leq k_r ([t],[tx]) \leq (\rho_2 (l_{n_r}([x])))^2$ for all $[t]\in G/G_{n_r}$. Thus we have
\begin{equation}\label{c2}
(\rho_1 (l_{n_r} ([x])))^2 \leq \phi_r ([x]) \leq (\rho_2 (l_{n_r}([x])))^2.
\end{equation}
Using the right-invariance of the integral we can also conclude that for any $[x],[y]\in G/G_{n_r}$ with $d_{n_r}([x],[y])<r$:
$$\phi_r ([x^{-1}y])=\int\limits_{G/G_{n_r}} k_r ([t],[tx^{-1}y])\, d\mu_{n_r}([t])=\int\limits_{G/G_{n_r}} k_r ([tx],[ty])\, d\mu_{n_r}([t]).$$
Remembering that the projection $\pi_{n_r}\colon G \to G/G_{n_r}$ is $r$-isometric, let
$$\psi_r: = \phi_r \circ \pi_{n_r}$$
on the ball $B_G(1_G,r)$ and $\psi_r \equiv 0$ outside this ball. We claim that $\psi_r \colon G \to [0,\infty)$ is an $r$-locally conditionally definite function on $G$. Choose a finite sequence of points $x_1,\dots , x_m$ in $G$ with $d(x_i,x_j)<r$ (for $1\leq i,j \leq m$) and a sequence of real scalars $\lambda_1,\dots , \lambda_m$ with $\sum\limits_{i=1}^{m}\lambda_i=0$. Then also $d_{n_r}([x_i],[x_j])=d(x_i,x_j)<r$ and
$$\begin{array}{l}
\sum\limits_{1\leq i,j \leq m} \lambda_i \lambda_j \, \psi_r (x_i^{-1}x_j)=\sum\limits_{1\leq i,j \leq m} \lambda_i \lambda_j \, \phi_r ([x_i^{-1}x_j])=\sum\limits_{1\leq i,j \leq m} \lambda_i \lambda_j \int\limits_{G/G_{n_r}} k_r ([tx_i],[tx_j]) \, d \mu_{n_r}([t])\\
=\int\limits_{G/G_{n_r}} \sum\limits_{1\leq i,j \leq m} \lambda_i \lambda_j \, k_r([tx_i],[tx_j])\, d \mu_{n_r}([t]) \leq 0.
\end{array}$$
Similarly, from \eqref{c2} we get (for $x$ in $B_G(1_G,r)$):
$$(\rho_1(l(x))^2=(\rho_1 (l_{n_r} ([x])))^2 \leq \phi_r ([x]) = \psi_r (x) \leq (\rho_2  (l_{n_r}([x])))^2=(\rho_2 (l(x)))^2.$$

The upper bound above is valid obviously for all $x\in G$ and does not depend on $r$, hence $(\psi_r (x))_{r\in \mathbb{N}}$ takes values in $[0,(\rho_2 (l(x)))^2]$. Since $G$ is countable, $\prod\limits_{x\in G} [0,(\rho_2 (l(x)))^2]$ is homeomorphic to the Hilbert cube. Since the latter is sequentially compact, there exists a subsequence of the sequence $(\psi_r)_{r\in \mathbb{N}}$ that converges pointwise to a function $\psi\colon G \to [0,\infty)$. After passing to the limit as $r\rightarrow \infty$, we obtain clearly
$$(\rho_1(l(x))^2 \leq \psi(x) \leq (\rho_2 (l(x)))^2,$$
which means (since $\lim \limits_{r\rightarrow \infty} \rho_1(r)=\infty$) that $\psi$ is a proper function.

It remains to show that $\psi$ is conditionally negative definite on the whole $G$. Indeed, for a finite sequence of points $x_1,\dots , x_m$ in $G$ and a sequence of real scalars $\lambda_1,\dots , \lambda_m$ with $\sum\limits_{i=1}^{m}\lambda_i=0$, there exists $N$ such that all the distances $d(x_i,x_j)$ are less than $N$. Thus for $r\geq N$ we can apply the fact that $\psi_r$ is $r$-locally conditionally negative definite, to obtain $\sum\limits_{1\leq i,j \leq m} \lambda_i \lambda_j \, \psi_r (x_i^{-1}x_j) \leq 0$. Eventually, passing to the limit of the suitable subsequence, we get $\sum\limits_{1\leq i,j \leq m} \lambda_i \lambda_j \, \psi(x_i^{-1}x_j) \leq 0$,
which ends the proof that $G$ has the Haagerup property.
\end{proof}

Now, we are moving on to the converse statement. The proof essentially applies methods used to prove Theorem 2.3 in \cite{Chen5}. Those ideas are applied here to the concrete situation and our notion of a fibred cofinitely-coarse embedding is used.

\begin{tw}\label{<=}
Let $G$ be a countable, residually amenable group. If $G$ has the Haagerup property, then any box family $\mathcal{X}$ of $G$ admits a fibred cofinitely-coarse embedding into some Hilbert space $H$.
\end{tw}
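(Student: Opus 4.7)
The plan is to convert the affine isometric action of $G$ afforded by the Haagerup property into fibre data and trivializations on each $G/G_n$, exploiting the fact that, for $n$ large, the projection $\pi_n\colon G\to G/G_n$ is a local isometry on a given ball. Concretely, the Haagerup property provides a proper conditionally negative definite $\psi\colon G\to[0,\infty)$ together with a proper affine isometric action $\alpha$ of $G$ on a real Hilbert space $H$, satisfying $\|\alpha(g)(0)-\alpha(h)(0)\|^{2}=\psi(h^{-1}g)$. I would declare $Y_{[g]}:=H$ for every $[g]\in\bigcup\mathcal{X}$, take the constant section $s([g]):=0\in H$, and define
\[ \rho_1(t):=\inf\bigl\{\sqrt{\psi(g)}\,:\,l(g)\geq t\bigr\},\qquad \rho_2(t):=\sup\bigl\{\sqrt{\psi(g)}\,:\,l(g)\leq t\bigr\}. \]
Both are non-decreasing, $\rho_2$ is finite by finiteness of balls in $G$, and $\rho_1(t)\to\infty$ by properness of $\psi$.

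Fix $r>0$. Since $B_G(1_G,4r)$ is finite and $\bigcap_n G_n=\{1_G\}$, there exists $n_r$ with $G_n\cap B_G(1_G,4r)=\{1_G\}$ whenever $n\geq n_r$. Put $\mathcal{K}_r:=\{(G/G_n,d_n)\,:\,n<n_r\}$, a finite subfamily of $\mathcal{X}$. For $n\geq n_r$ and a subset $C\subset G/G_n$ of diameter less than $r$, pick an arbitrary base point $[g_0]\in C$ with any lift $g_0\in G$; for each $[y]\in C$ let $h(y)$ be the unique element of $G$ of length less than $r$ with $\pi_n(h(y))=g_0^{-1}[y]$, set $\tilde{y}:=g_0 h(y)$, and define the trivialization by $t_C([y]):=\alpha(\tilde{y})\colon H\to H$, which is an affine isometry. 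Existence and uniqueness of $h(y)$ follow from $d_n([g_0],[y])<r$ and $G_n\cap B_G(1_G,2r)=\{1_G\}$.

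Metric control is then immediate: for $[x],[y]\in C$ one computes
\[ \|t_C([x])(s([x]))-t_C([y])(s([y]))\|^{2}=\|\alpha(\tilde{x})(0)-\alpha(\tilde{y})(0)\|^{2}=\psi(\tilde{y}^{-1}\tilde{x}); \]
a short ball-size comparison shows that $\tilde{y}^{-1}\tilde{x}\in B_G(1_G,2r)$ is the minimum-length representative of $[y]^{-1}[x]$, so $l(\tilde{y}^{-1}\tilde{x})=d_n([x],[y])$ and the $\rho_1,\rho_2$ bounds follow at once.

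The main obstacle, and the reason for demanding the larger ball $B_G(1_G,4r)$ in the choice of $n_r$, is the coincidence condition. If $C_1$ and $C_2$ share a point $[y]$, a direct computation yields $t_{C_1}([y])\circ t_{C_2}([y])^{-1}=\alpha\bigl(g_0^{(1)}\,k(y)\,(g_0^{(2)})^{-1}\bigr)$ where $k(y)=h_1(y)h_2(y)^{-1}\in B_G(1_G,2r)$ has constant projection $(g_0^{(1)})^{-1}g_0^{(2)}$ in $G/G_n$; the condition $G_n\cap B_G(1_G,4r)=\{1_G\}$ then forces $k(y)$ to be uniquely determined by this projection, hence independent of $[y]$, so $\alpha\bigl(g_0^{(1)}\,k\,(g_0^{(2)})^{-1}\bigr)$ serves as the common isometry $t_{C_1C_2}$ of $H$. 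Amenability of the quotients plays no role in this direction; it was only required for Theorem \ref{=>}.
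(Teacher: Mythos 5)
Your proposal is correct and follows essentially the same route as the paper: local lifts of cosets via the condition $G_n\cap B_G(1_G,Kr)=\{1_G\}$, trivializations built from the affine action $\alpha$ evaluated at those lifts, control functions given by $\|\alpha(g)(0)\|=\sqrt{\psi(g)}$ (the paper's $\|b(g)\|$), and coherence of overlapping trivializations forced by triviality of $G_n$ on a slightly larger ball. The only cosmetic difference is that you take the constant field $Y_{[g]}=H$ with zero section, whereas the paper packages the fibres as orbit spaces $(G_na\times H)/G_n$ before identifying each with $H$ by exactly the map you use; the two are isometrically the same data.
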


\begin{proof}
Let $(G_n)_{n\in \mathbb{N}}$ be a nested sequence of normal subgroups of $G$ with trivial intersection (we will not use the assumption on the quotients to be amenable) and $\mathcal{X}:=\left\{\left(G/G_n,d_n\right)\colon n\in \mathbb{N}\right\}$ the corresponding box family. By assumption, there exists a Hilbert space $H$ and a proper action of $G$ on $H$ by affine isometries, i.e. a homomorphism $\alpha \colon G \to \mbox{Iso}(H)$. We can decompose $\alpha(g)$ into its translational and linear part, as follows:
$$\alpha(g)(x)=b(g)+L(g)(x),$$
where $g\in G,\, x\in H$.
Then, the relation $b(gh)=b(g)+L(g)(b(h))$ is satisfied for all $g,h\in G$.

For each $n\in \mathbb{N}$ there is a natural action of $G_n$ on $G \times H$, explicitly we have
$$g(x,y):=(gx,\alpha(g)(y))$$
for $g\in G_n, \, x\in G,\, y\in H$. The orbit of $(x,y)$ will be denoted by $[(x,y)]$. If $[a]=\pi_n(a)=G_n a$ is an element of $G/G_n$ ($\pi_n$ is a quotient map), the action can be restricted to $G_n a \times H$ and we define
$$H_{[a]}:=(G_n a \times H)/G_n,$$
which means that $H_{[a]}$ is the orbit space. We define a metric on it:
$$d_{H_{[a]}}([(x,y)],[(x',y')]):=\|y'-\alpha(x'x^{-1})(y)\|.$$
We show that $d_{H_{[a]}}$ is well-defined. Let $x,x'\in G_n a,\, y,y'\in H,\, g,h\in G$. Then
$$
\begin{array}{l}
d_{H_{[a]}}([(gx,\alpha(g)(y))],[(hx',\alpha(h)(y'))])= \|\alpha(h)(y')-\alpha(hx'x^{-1}g^{-1})(\alpha(g)(y))\| \\
=\|y'-\alpha(x'x^{-1})(y)\|=d_{H_{[a]}}([(x,y)],[(x',y')])
\end{array}$$
because $\alpha(h)$ is an isometry. We shall only verify the triangle inequality:
$$
\begin{array}{l}
d_{H_{[a]}}([(x,y)],[(x'',y'')])=\|y''-\alpha(x''x^{-1})(y)\|\leq \|y''-\alpha(x''x'^{-1})(y')\| \\
+\|\alpha(x''x'^{-1})(y')-\alpha(x''x^{-1})(y)\|=d_{H_{[a]}}([(x',y')],[(x'',y'')])+\|\alpha(x'^{-1})(y')-\alpha(x^{-1})(y)\| \\
=d_{H_{[a]}}([(x',y')],[(x'',y'')])+\|\alpha(x'x'^{-1})(y')-\alpha(x'x^{-1})(y)\| \\
=d_{H_{[a]}}([(x',y')],[(x'',y'')])+d_{H_{[a]}}([(x,y)],[(x',y')]).
\end{array}$$
It is worth noting that, for any fixed $a_0\in G_n a$, the map
\begin{equation}\label{lift}
[(x,y)]\mapsto \alpha(a_0 x^{-1})(y)
\end{equation}
is an isometry from $H_{[a]}$ onto $H$.
Clearly, $\|\alpha(a_0 x'^{-1})(y') - \alpha(a_0 x^{-1})(y)\|=\|y' - \alpha(x'x^{-1})(y)\|=d_{H_{[a]}}([(x,y)],[(x',y')])$, and each $y\in H$ is the image of $[(a_0,y)]$.

So, we have obtained the field $(H_{[a]})_{[a]\in G/G_n}$ of metric spaces isometric to $H$. Letting $n$ vary, we get $(H_x)_{x\in \bigcup \mathcal{X}}$.
For $[a]\in G/G_n$, put $s([a]):=[(a,b(a))]\in H_{[a]}$. Since $[(ga,b(ga))]=[(ga,\alpha(g)(b(a)))]=[(a,b(a))]$, this is a well-defined section on $\bigcup \mathcal{X}$.

Let $r>0$. Take $n_r\in \mathbb{N}$ to be the smallest number such that, for $n\geq n_r$, $B_G (1_G,3r)\, \cap \, G_{n_r} = \{1_G\}$. Let now $\mathcal{K}_r:=\{G/G_n \colon n<n_r\}$. Choose a space $(G/G_n,d_n)\in \mathcal{X}\setminus \mathcal{K}_r$ and a subset $C\subset G/G_n$ with $\mbox{diam}(C)<r$. We are going to define a trivialization
$$t_C\colon \bigsqcup \limits_{x\in C} H_x \to C\times H.$$
Let $z\in G$ be an arbitrary point (``basepoint") of $\pi_n^{-1}(C)$. Then each $[a]\in C$ has a unique lift $a_0\in G$ such that $[a_0]=[a]$ and $d(z,a_0)<r$. Moreover, the lifting map $[a]\mapsto a_0$ is isometric on $C$. We define $t_C ([a])\colon H_{[a]} \to H$ to be the map given in \eqref{lift} with respect to our choice of $a_0$.
Now, for any $[a],[a']\in C$, we have
$$
\begin{array}{l}
\|t_C([a])(s([a]))-t_C([a'])(s([a']))\|=\|t_C([a])([(a,b(a))])-t_C([a'])([(a',b(a'))])\| \\
=\|\alpha(a_0 a^{-1})(b(a))-\alpha(a'_0 a'^{-1})(b(a'))\|=\|b(a_0)-b(a'_0)\| \\
=\|b(1_G)-b(a_0^{-1}a'_0)\|=\|b(a_0^{-1}a'_0)\|.
\end{array}$$
If we define $\rho_1(x):=\min\{\|b(g)\|\colon g\in G, l(g)\geq x\}$ and $\rho_2(x):=\max\{\|b(g)\|\colon g\in G, l(g)\leq x\}$ for all $x\in [0,\infty)$, then
$$
\begin{array}{l}
\|t_C([a])(s([a]))-t_C([a'])(s([a']))\|=\|b(a_0^{-1}a'_0)\|\in [\rho_1(d(a_0,a'_0)), \rho_2(d(a_0,a'_0))] \\
=[\rho_1(d_n([a],[a'])),\rho_2(d_n([a],[a']))].
\end{array}$$
Note that the controlling functions are independent on $n\in \mathbb{N}$. The condition $\lim \limits_{r\rightarrow \infty} \rho_1(r) =\infty$ is satisfied due to the fact that the action of $G$ on $H$ is proper. In the trivial situation when $G$ is finite, $\rho_1$ needs a slight modification to avoid taking the value $\infty$, because the set $\{\|b(g)\|\colon g\in G, \, l(g)\geq x\}$ is empty for large $x$. However, this is not a significant problem.

It only suffices to show that, whenever $C_1,C_2\subset G/G_n$ have diameters less than $r$ and $[a]\in C_1 \cap C_2$, the corresponding trivializations $t_{C_1}([a]), t_{C_2}([a])$ differ only by an isometric map $t_{C_1}([a]) \circ t^{-1}_{C_2}([a]) \colon H \to H$.
Let $z_1 \in \pi_n^{-1}(C_1)$ and $z_2 \in \pi_n^{-1}(C_2)$ be ``basepoints" and $a_1, a_2$ corresponding lifts of $[a]$. Then, for any $y\in H$, we have
$$(t_{C_1}([a]) \circ t^{-1}_{C_2}([a]) )(y)=t_{C_1}([a]) ( [(a_2,y)])=\alpha(a_1 a_2^{-1})(y).$$
By assumption, the map $\alpha(a_1 a_2^{-1}) \colon H \to H$ is an isometry, which completes the proof.
\end{proof}

Combining Theorems \ref{=>} and \ref{<=}, we can formulate a sufficient and necessary condition for a residually amenable group to have the Haagerup property.

\begin{cor}
A countable, residually amenable group has the Haagerup property if and only if one (or, equivalently, all) of its box families admits a fibred cofinitely-coarse embedding into a Hilbert space.
\end{cor}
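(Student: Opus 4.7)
The corollary packages the equivalence encoded by Theorems \ref{=>} and \ref{<=}, together with the standing observation that a residually amenable group admits at least one box family. The plan is to chase the obvious cycle of implications: the Haagerup property forces \emph{every} box family to admit a fibred cofinitely-coarse embedding into a Hilbert space, the existence of such an embedding for \emph{some} box family forces the Haagerup property, and ``for every'' trivially implies ``for some''.

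Concretely, I would structure the proof as a single paragraph. First, assume $G$ has the Haagerup property and let $\mathcal{X}=\{(G/G_n,d_n):n\in\mathbb{N}\}$ be any box family, defined with respect to some nested sequence $(G_n)$ of normal subgroups with trivial intersection (such a sequence exists by Definition \ref{rag}, since residual amenability is stronger than the analogous residually trivial condition). Theorem \ref{<=} applies directly: it does not use amenability of the quotients, only the nestedness and trivial intersection, so $\mathcal{X}$ admits a fibred cofinitely-coarse embedding into a Hilbert space. This establishes the ``only if'' direction and, at the same time, the ``all'' version of the conclusion. Conversely, if \emph{some} box family of $G$ admits a fibred cofinitely-coarse embedding into a Hilbert space, then Theorem \ref{=>} immediately yields the Haagerup property for $G$. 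Combining these two observations closes the cycle and gives the equivalence of all three statements: (i) $G$ has the Haagerup property, (ii) some box family of $G$ admits a fibred cofinitely-coarse embedding into a Hilbert space, (iii) every box family of $G$ does.

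There is essentially no obstacle, since the corollary is a formal consequence of the two preceding theorems; the only subtlety worth a line of commentary is that Theorem \ref{<=} works for arbitrary nested sequences $(G_n)$ with trivial intersection, which is exactly what makes the passage from ``some'' to ``all'' legitimate for a given residually amenable $G$.
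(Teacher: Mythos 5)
Your proof is correct and follows exactly the route the paper intends: the corollary is stated as an immediate consequence of Theorems \ref{=>} and \ref{<=}, and your cycle (i)$\Rightarrow$(iii)$\Rightarrow$(ii)$\Rightarrow$(i) is precisely how those two theorems combine. Your remark that Theorem \ref{<=} does not use amenability of the quotients, so it applies to every box family and legitimizes the ``some/all'' equivalence, is the one subtlety worth noting, and the paper's proof of that theorem makes the same point explicitly.
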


Applying our Corollary \ref{finite}, we repeat the main statement from \cite{Chen5}:

\begin{cor}
A countable, residually finite group has the Haagerup property if and only if one (or, equivalently, all) of its box spaces admits a fibred coarse embedding into a Hilbert space.
\end{cor}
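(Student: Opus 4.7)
The plan is to derive this corollary almost immediately from the two ingredients already established: the previous corollary (the residually amenable characterization combining Theorems \ref{=>} and \ref{<=}) and Corollary \ref{finite} (equivalence of the two fibred notions in the residually finite setting). No new construction is required; the argument is a short chain of logical equivalences.

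First I would observe that every countable residually finite group is in particular countable residually amenable, since finite groups are amenable, so the sequence of finite-index normal subgroups witnessing residual finiteness automatically satisfies Definition \ref{rag}. Hence the previous corollary applies verbatim, giving the equivalence between the Haagerup property of $G$ and the existence (equivalently, for all choices) of a fibred cofinitely-coarse embedding of a box family $\mathcal{X}=\{(G/G_n,d_n)\}_{n\in\mathbb{N}}$ into a Hilbert space.

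Next I would invoke Corollary \ref{finite}: because each quotient $G/G_n$ is finite and hence bounded, a fibred cofinitely-coarse embedding of the box family $\mathcal{X}$ into a Hilbert space $H$ exists if and only if a fibred coarse embedding of the associated box space $\mathrm{Box}(G)=\bigsqcup_n G/G_n$ into $H$ exists. Chaining this equivalence with the previous one yields: $G$ has the Haagerup property iff one, and equivalently all, of the box spaces of $G$ admits a fibred coarse embedding into a Hilbert space.

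There is no significant obstacle in the argument; the only thing worth double-checking is the quantifier ``one (or, equivalently, all)''. This matches the formulation in the preceding corollary, which already carries the same quantifier at the level of box families, and Corollary \ref{finite} transfers the equivalence choice-by-choice between a box family and its corresponding box space. Hence the ``one'' and the ``all'' versions propagate through the chain without loss, completing the proof.
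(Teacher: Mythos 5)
Your argument is exactly the paper's: the corollary is obtained by specializing the residually amenable characterization (finite groups being amenable) and then invoking Corollary \ref{finite} to pass between the box family and the box space. This matches the paper's one-line derivation, so the proposal is correct and takes essentially the same approach.
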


\end{document}